\let\savedVert\|

\makeatletter
\def\cup@reference@code{%
  \RequirePackage[style=numeric,sorting=none,backend=biber]{biblatex}%
  \renewcommand*{\bibfont}{\footnotesize}%
}
\makeatother

\documentclass[
  journal=largetwo,
  manuscript=article,
  manuscriptlabel={Research Preprint},
  logo=false,
  simfonts=false,
  year=2026,
  volume=1,
]{cup-journal}

\let\|\savedVert

\usepackage[utf8]{inputenc}
\usepackage[T1]{fontenc}
\usepackage{amsmath,amssymb,amsthm}
\usepackage{mathtools}
\usepackage[varg]{newtxmath}
\DeclareMathSizes{10.95}{9.95}{7}{5}
\usepackage{graphicx}
\usepackage{booktabs}
\usepackage{enumitem}
\setlist{nosep,leftmargin=*}
\usepackage{algorithm}
\usepackage{algpseudocode}
\usepackage{xcolor}
\usepackage{microtype}
\usepackage[colorlinks=true,linkcolor=blue,citecolor=blue]{hyperref}
\usepackage{cleveref}
\usepackage{orcidlink}

\usepackage{etoolbox}
\AtBeginEnvironment{table}{\footnotesize\setlength{\tabcolsep}{3pt}}
\AtBeginEnvironment{align*}{\small}

\definecolor{primary}{HTML}{006BA2}
\definecolor{accent}{HTML}{E3120B}
\definecolor{neutral}{HTML}{4A4A4A}

\definecolor{plotblue}{HTML}{006BA2}
\definecolor{plotred}{HTML}{E3120B}
\definecolor{plotgreen}{HTML}{2E7D32}
\definecolor{plotorange}{HTML}{FF6F00}

\definecolor{boxfill}{HTML}{E8F4FD}
\definecolor{boxborder}{HTML}{006BA2}

\definecolor{hlblue}{HTML}{E3F2FD}
\definecolor{hlred}{HTML}{FFEBEE}

\newtheorem{theorem}{Theorem}[section]
\newtheorem{lemma}[theorem]{Lemma}

\newtheorem{corollary}[theorem]{Corollary}

\theoremstyle{definition}
\newtheorem{definition}[theorem]{Definition}

\theoremstyle{remark}
\newtheorem{remark}[theorem]{Remark}

\newcommand{\paperbibliography}{\printbibliography}

\makeatletter
\def\cup@journal@name{Fast Evaluation of Truncated Neumann Series}
\def\cup@manuscript{preprint}
\def\cup@year{February 2026}
\makeatother

\title{Fast Evaluation of Truncated Neumann Series\\
  by Low-Product Radix Kernels}
\author{Piyush Sao\,\orcidlink{0000-0002-9432-5855}}
\affiliation{Oak Ridge National Laboratory, Oak Ridge, TN, USA}
\email{saopk@ornl.gov}
\makeatletter\def\cup@contact@details{}\makeatother
\keywords{Neumann series; matrix polynomial; radix kernel;
  matrix multiplication; computational complexity}

\begin{document}

\begin{abstract}
%% Abstract - OCAR narrative

%% Opening + Challenge
Truncated Neumann series $S_k(A)=I+A+\cdots+A^{k-1}$ are used in approximate
matrix inversion and polynomial preconditioning.
In dense settings, matrix-matrix products dominate the cost of evaluating $S_k$.
Naive evaluation needs $k-1$ products, while splitting methods reduce this to
$O(\log k)$.
Repeated squaring, for example, uses $2\log_2 k$ products, so further gains
require higher-radix kernels that extend the series by $m$ terms per update.
Beyond the known radix-5 kernel, explicit higher-radix constructions were not
available, and the existence of exact rational kernels was unclear.

%% Action + Resolution
We construct \emph{radix kernels} for $T_m(B)=I+B+\cdots+B^{m-1}$ and use them
to build faster series algorithms.
For radix 9, we derive an exact 3-product kernel with rational coefficients,
which is the first exact construction beyond radix 5.
This kernel yields $5\log_9 k=1.58\log_2 k$ products, a 21\% reduction from
repeated squaring.
For radix 15, numerical optimization yields a 4-product kernel that matches the
target through degree 14 but has nonzero \emph{spillover} (extra terms) at
degrees $\ge 15$.
Because spillover breaks the standard telescoping update, we introduce a
residual-based radix-kernel framework that accommodates approximate kernels and
retains coefficient $(\mu_m+2)/\log_2 m$.
Within this framework, radix 15 attains $6/\log_2 15\approx 1.54$, the best
known asymptotic rate.
Numerical experiments support the predicted product-count savings and associated
runtime trends.

\end{abstract}

{%
\renewcommand{\thefootnote}{\fnsymbol{footnote}}%
\footnotetext{\noindent\rule{0.8\columnwidth}{0.4pt}\\[2pt]This manuscript has been authored by UT-Battelle,
  LLC under Contract No.\ DE-AC05-00OR22725 with the
  U.S.\ Department of Energy. The publisher, by accepting the
  article for publication, acknowledges that the United States
  Government retains a non-exclusive, paid-up, irrevocable,
  world-wide license to publish or reproduce the published form
  of this manuscript, or allow others to do so, for United States
  Government purposes. The Department of Energy will provide
  public access to these results of federally sponsored research
  in accordance with the DOE Public Access Plan
  (\url{http://energy.gov/downloads/doe-public-access-plan}).}%
}

%% Shared body - included by main.tex and arxiv.tex
%% Each wrapper defines \paperbibliography before \input{body}

%% Main content
%% Introduction - OCAR with forward flow

\section{Introduction}
\label{sec:introduction}

%% OPENING: What is the object and why it matters
The truncated Neumann series is a core object in numerical linear algebra.
For a matrix $A$ with spectral radius $\rho(A) < 1$, it is defined by
\begin{equation}
  \label{eq:neumann}
  S_k(A) = \sum_{j=0}^{k-1} A^j = I + A + A^2 + \cdots + A^{k-1}.
\end{equation}
As $k \to \infty$, $S_k(A)$ converges to $(I-A)^{-1}$, so finite truncations give
practical approximate inverses.
These approximations appear in polynomial preconditioning~\cite{dubois1979,saad2003},
resolvent approximation~\cite{higham2008}, log-determinant estimation~\cite{sao2025logdet},
and massive MIMO signal detection~\cite{gustafsson2017}.
In dense problems, the dominant cost is matrix-matrix multiplication (GEMM), so
reducing product count is the main efficiency goal.

Existing splitting methods already reduce the naive $k-1$ products to
$O(\log k)$~\cite{lei1992}.
Binary splitting (repeated squaring) uses $2\log_2 k$ products.
Hybrid radix-$\{2,3\}$ schemes~\cite{dimitrov1995} lower the constant only
slightly, to about $1.9\log_2 k$.
Thus asymptotic order is no longer the bottleneck; the leading coefficient is.

%% CHALLENGE: The gap we address
Improving that coefficient requires better radix kernels.
Let $T_m(B)=I+B+\cdots+B^{m-1}$ and let $\mu_m$ denote the minimum number of
products to evaluate $T_m$.
Using the decomposition
\begin{equation}
  \label{eq:decomposition}
  S_{mn}(A) = S_n(A) \cdot T_m(A^n).
\end{equation}
This identity gives one radix-$m$ update cost as
$C(m)=\mu_m+2$ products: $\mu_m$ for the kernel, one
for concatenation, and one for updating the next power.
Hence the asymptotic coefficient is $C(m)/\log_2 m$.
Each product can at most double polynomial degree, so
$\mu_m \ge \lceil\log_2(m-1)\rceil$.
This lower bound gives $\mu_9 \ge 3$ and $\mu_{15} \ge 4$, which suggests that
higher-radix kernels could improve constants if such kernels exist.
Gustafsson et al.~\cite{gustafsson2017} achieved $\mu_5 = 2$ for radix 5, but
explicit constructions beyond radix 5 were missing.
This leaves two linked questions: can radix 9 reach its lower bound exactly, and
can higher-radix kernels remain useful when they are only approximate?

%% ACTION: What we contribute to fill the gap
We answer these questions with three contributions.
First, we construct an exact radix-9 kernel with $\mu_9 = 3$ and rational
coefficients (Theorem~\ref{thm:radix9}), giving
$5\log_9 k = 1.58\log_2 k$ products.
This is the first exact construction beyond radix 5 and gives a 21\% improvement
over binary splitting.
Second, for radix 15 we obtain a 4-product kernel by numerical optimization; it
matches the target prefix through degree 14 but has nonzero spillover
(unwanted higher-degree terms) at degrees $\ge 15$.
Third, we develop a residual-based general radix-kernel framework that
accommodates spillover and preserves coefficient $(\mu_m+2)/\log_2 m$.
Within this framework, radix 15 reaches $6/\log_2 15 \approx 1.54$, the best
known asymptotic coefficient.

%% RESOLUTION: Contributions and paper map
The paper proceeds as follows.
Section~\ref{sec:preliminaries} reviews background and prior work.
Section~\ref{sec:kernel} presents the radix-9 construction.
Section~\ref{sec:radix15} gives the approximate radix-15 kernel.
Section~\ref{sec:generalradix} develops the general framework, and
Section~\ref{sec:numerics} reports numerical results.

%% Background - Existing methods and their limitations

\section{Background}
\label{sec:preliminaries}

%% OPENING: Review existing methods, preview bottleneck
We review existing methods and show that naive kernel computation is their
main limitation.
Given $A \in \mathbb{R}^{d \times d}$ with $\rho(A) < 1$, define the
truncated Neumann series as in~\eqref{eq:neumann}:
\begin{equation}
  \label{eq:series}
  S_k(A) = \sum_{j=0}^{k-1} A^j = I + A + A^2 + \cdots + A^{k-1}.
\end{equation}
We measure cost by counting matrix products (GEMMs), which dominate
when $d \gtrsim 100$ for dense matrices.

\subsection{Splitting Methods}

Several radix-based schemes exist, differing in how many terms they
add per step.
\begin{itemize}
  \item \textbf{Naive evaluation.}
    The naive approach accumulates powers $I, A, A^2, \ldots$, requiring
    $k-1$ matrix products---prohibitive for large $k$.
  \item \textbf{Radix-$m$ splitting.}
    Faster algorithms use the factorization $S_{mn}(A) = S_n(A) \cdot
    (I + A^n + \cdots + A^{(m-1)n})$.
    A radix-$m$ scheme iterates this identity. Each step computes:
    \begin{itemize}
      \item powers $A^{2n}, \ldots, A^{(m-1)n}$ from $A^n$ ($m-2$ products),
      \item the product $S_n \cdot (I + A^n + \cdots + A^{(m-1)n})$
        (1 product),
      \item and $A^{mn} = A^{(m-1)n} \cdot A^n$ (1 product).
    \end{itemize}
    The naive cost per step is $C(m) = m$ products, giving coefficient
    $m/\log_2 m$.
  \item \textbf{Binary splitting.}
    For $m = 2$, the identity simplifies to $S_{2n} = S_n(I + A^n)$, using
    only 2 products per step~\cite{westreich1989,lei1992}:
    (i)~$A^{2n} = A^n \cdot A^n$;
    (ii)~$S_{2n} = S_n \cdot (I + A^n)$.
    Total: $2\log_2 k$ products.
  \item \textbf{Ternary splitting.}
    For $m = 3$, $S_{3n} = S_n(I + A^n + A^{2n})$:
    (i)~$A^{2n} = A^n \cdot A^n$;
    (ii)~$A^{3n} = A^{2n} \cdot A^n$;
    (iii)~$S_{3n} = S_n \cdot (I + A^n + A^{2n})$.
    Total: $3\log_3 k \approx 1.89\log_2 k$ products.
\end{itemize}

%% Analysis: optimal radix
\paragraph{Optimal radix for naive splitting.}
The coefficient $C(m)/\log_2 m$ equals $m/\log_2 m$, minimized
at $m = e \approx 2.718$; thus $m = 3$ is optimal among integers:
\begin{center}
\begin{tabular}{@{}ccc@{}}
  \toprule
  Radix $m$ & Cost $C(m)$ & Coefficient $C(m)/\log_2 m$ \\
  \midrule
  2 & 2 & $2/\log_2 2 = 2.00$ \\
  3 & 3 & $3/\log_2 3 \approx 1.89$ \\
  4 & 4 & $4/\log_2 4 = 2.00$ \\
  \bottomrule
\end{tabular}
\end{center}
Thus ternary has the best coefficient among pure radix methods.
Dimitrov and Cooklev~\cite{dimitrov1995} extended this to hybrid
radix-$\{2,3\}$ methods for arbitrary $k$, achieving coefficient
$\approx 1.9$.
These mixed $\{2,3\}$ strategies are near-optimal, but only with
naive kernel computation.

\subsection{Radix Kernels}

The splitting methods above compute the partial sum
$T_m(B) = I + B + \cdots + B^{m-1}$ naively, requiring $m - 2$ products
for powers $B^2, \ldots, B^{m-1}$.
We call $T_m$ the \emph{kernel}.
If the kernel cost $\mu_m$ grew only logarithmically in $m$, the
coefficient $(\mu_m + 2)/\log_2 m$ could fall below $1.89$.
Two identities make this possible:
\begin{itemize}
  \item \textbf{Block concatenation.}
    $S_{mn}(A) = S_n(A) \cdot T_m(A^n)$ for any $m, n \geq 1$,
    verified by expanding both sums.
  \item \textbf{Telescoping.}
    $T_m(B)(I - B) = I - B^m$\label{eq:telescope}
    (the geometric series identity), which extracts $B^m$ from the
    kernel without extra products.
\end{itemize}

\paragraph{Cost model.}
Let $\mu_m$ denote the minimum number of products to compute $T_m(B)$.
A radix-$m$ update $(S_n, A^n) \to (S_{mn}, A^{mn})$ uses:
\begin{itemize}
  \item $\mu_m$ products for the kernel,
  \item 1 product for telescoping:
    $I - A^{mn} = T_m(A^n)(I - A^n)$,
  \item 1 product for concatenation:
    $S_{mn} = S_n \cdot T_m(A^n)$.
\end{itemize}
The total is $C(m) = \mu_m + 2$, giving coefficient $C(m)/\log_2 m$.

\paragraph{Lower bound.}
Since each product at most doubles polynomial degree, $p$ products
starting from $\{I, B\}$ reach degree at most $2^p$.
Because $T_m$ has degree $m-1$, evaluating it requires at least
$\lceil \log_2(m-1) \rceil$ products.
In particular, $\mu_5 \geq 2$ and $\mu_9 \geq 3$.
Can these bounds be achieved?

\subsection{Prior Work: Quinary (Radix-5) Kernel}
\label{subsec:quinary}

For radix-5, the answer is yes.
Gustafsson et al.~\cite{gustafsson2017} achieved $\mu_5 = 2$ with
the following construction:
(i)~$U := B^2$;
(ii)~$V := U(B + U) = B^3 + B^4$;
(iii)~$T_5(B) := I + B + U + V$.
Direct verification confirms $T_5(B) = I + B + B^2 + B^3 + B^4$.

With $\mu_5 = 2$, a quinary update costs $C(5) = 4$ products,
giving coefficient $4/\log_2 5 \approx 1.72$.

%% RESOLUTION: Establish unknowns and challenges
\paragraph{Open questions.}
The quinary kernel reduces the coefficient, but gaps remain.
First, no radix-9 construction exists, though
the degree-doubling bound permits $\mu_9 = 3$, giving coefficient
$5/\log_2 9 \approx 1.58$.
Second, it is unknown whether exact rational kernels exist for higher
radices such as $m = 15$. Approximate constructions may have nonzero
\emph{spillover} into degrees $\geq m$, breaking the telescoping
identity~\eqref{eq:telescope} and preventing exact extraction
of $B^m$. No framework exists for such kernels in series algorithms.
Sections~\ref{sec:kernel}--\ref{sec:generalradix} address these gaps.

%% Radix-9 Kernel

\section{Radix-9 Kernel}
\label{sec:kernel}

%% OPENING: Motivation
The quinary kernel~\cite{gustafsson2017} achieves coefficient $4/\log_2 5 \approx 1.72$.
Can we improve on this by going to higher radices?
For radix-9, beating $1.72$ requires $C(9)/\log_2 9 < 1.72$, i.e., at most
5 products. Since $C(9) = \mu_9 + 2$, we need $\mu_9 \leq 3$. The
degree-doubling bound gives $\mu_9 \geq 3$, so 3~products is optimal.

\paragraph{Derivation strategy.}
To achieve 3 products, we seek a factorization $P \cdot Q$ where $P, Q$ are linear combinations of $\{B, U, V\}$ with $U = B^2$ and $V = B^3 + 2B^4$ (computable in 2~products).
The product $P \cdot Q$ spans degrees 2--8. We choose coefficients so that:
(i) degrees 5--8 have coefficient~1 directly from convolution; (ii) degrees
2--4 are corrected by post-adding multiples of $U$ and $V$. This yields 8
equations in 8 unknowns (4 coefficients each for $P$ and $Q$), which we solve
symbolically.

%% The radix-9 theorem
\begin{theorem}[Radix-9 kernel in 3 products]
  \label{thm:radix9}
  The kernel $T_9(B) = I + B + \cdots + B^8$ is computed by:
  \begin{enumerate}
    \item $U := B^2$ \hfill (MM1)
    \item $V := U(B + 2U) = B^3 + 2B^4$ \hfill (MM2)
    \item $P := \tfrac{3}{20}B + 2U + V$, \quad $Q := \tfrac{11}{40}B - \tfrac{1}{8}U + \tfrac{1}{4}V$
    \item $W := P \cdot Q$ \hfill (MM3)
    \item $T_9(B) := W + I + B + \tfrac{767}{800}U + \tfrac{15}{32}V$
  \end{enumerate}
\end{theorem}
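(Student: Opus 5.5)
The plan is to verify the claimed identity directly by expanding everything as polynomials in $B$. This is legitimate because every quantity in the construction ($U,V,P,Q,W$) is a polynomial in the single matrix $B$, and such polynomials commute. So the matrix identity follows from the corresponding identity in $\mathbb{Q}[x]$, with $x$ replaced by $B$. The product count is read off the construction. Only three steps involve a matrix–matrix multiplication: MM1, MM2 and MM3. Forming $B+2U$, $P$, $Q$ and the final sum needs only scalar multiples and additions.

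First I confirm the two preliminary products: $U=B^2$, and $V=U(B+2U)=B^3+2B^4$. Substituting these gives
\[
P=\tfrac{3}{20}B+2B^2+B^3+2B^4,\qquad Q=\tfrac{11}{40}B-\tfrac18B^2+\tfrac14B^3+\tfrac12B^4 .
\]
Write $P=\sum_{i=1}^4 p_iB^i$ and $Q=\sum_{j=1}^4 q_jB^j$. Then $W=PQ$ has support in degrees $2$ through $8$, and the coefficient of $B^d$ is the convolution $w_d=\sum_{i+j=d}p_iq_j$. I compute these from the top degree down, since the high degrees involve the fewest terms:
\begin{align*}
w_8 &= p_4q_4 = 2\cdot\tfrac12 = 1,\\
w_7 &= p_3q_4+p_4q_3 = \tfrac12+\tfrac12 = 1,\\
w_6 &= p_2q_4+p_3q_3+p_4q_2 = 1+\tfrac14-\tfrac14 = 1,\\
w_5 &= p_1q_4+p_2q_3+p_3q_2+p_4q_1 = \tfrac{3}{40}+\tfrac12-\tfrac18+\tfrac{11}{20} = 1 .
\end{align*}
This is the design requirement (i) from the derivation strategy: degrees $5$–$8$ come out exactly $1$ from the product alone.

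Next I handle the low degrees, which the post-additions must correct. The remaining convolution coefficients are
\[
w_2=\tfrac{33}{800},\qquad w_3=-\tfrac{3}{160}+\tfrac{22}{40}=\tfrac{17}{32},\qquad w_4=\tfrac{3}{80}-\tfrac14+\tfrac{11}{40}=\tfrac1{16}.
\]
The final step adds $I+B+\tfrac{767}{800}B^2+\tfrac{15}{32}(B^3+2B^4)$, so the totals in degrees $0$ through $4$ are:
\begin{itemize}
\item degrees $0,1$: $1$ and $1$, supplied directly by $I+B$;
\item degree $2$: $\tfrac{33}{800}+\tfrac{767}{800}=1$;
\item degree $3$: $\tfrac{17}{32}+\tfrac{15}{32}=1$;
\item degree $4$: $\tfrac1{16}+\tfrac{30}{32}=1$.
\end{itemize}
Together with degrees $5$–$8$, every coefficient from degree $0$ to degree $8$ equals $1$, and there is nothing above degree $8$. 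Hence the output is exactly $T_9(B)$.

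There is no conceptual obstacle. The only delicate part is the rational arithmetic in $w_5$ and $w_3$, where several fractions with different denominators must cancel. I will double-check these by clearing to a common denominator of $40$ and $160$ respectively. I will also check one structural point: the degree-$4$ correction reuses the same multiple $\tfrac{15}{32}$ of $V$ that fixes degree $3$. Degree $4$ therefore works only because $w_4=1-2(1-w_3)$, and this coincidence has to be verified rather than assumed.
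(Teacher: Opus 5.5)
Your proposal is correct and follows the same route as the paper's proof. Both expand $P$ and $Q$ in powers $B^1,\dots,B^4$, check by convolution that degrees $5$--$8$ of $PQ$ equal $1$, and verify that the post-added multiples of $U$ and $V$ bring degrees $2$--$4$ up to $1$. Your values $w_2=\tfrac{33}{800}$, $w_3=\tfrac{17}{32}$, $w_4=\tfrac{1}{16}$ and your convolutions for $w_5,\dots,w_8$ are all correct, and they supply the arithmetic that the paper leaves as ``direct calculation.''
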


The key insight is that one carefully chosen product $P \cdot Q$ simultaneously
generates four high-degree terms ($B^5$ through $B^8$), while the
low-degree corrections adjust $B^2$--$B^4$ using only additions.

\begin{proof}
  Let $\mathbf{p} = [0.15, 2, 1, 2]$ and $\mathbf{q} = [0.275, -0.125, 0.25, 0.5]$
  be the coefficients of $P$ and $Q$ in powers $B^1, B^2, B^3, B^4$. Direct
  calculation shows convolution yields coefficient~1 for $B^5$ through $B^8$.
  The low-degree terms $B^2, B^3, B^4$ are corrected to coefficient~1 by
  adding the stated fractions of $U$ and $V$.
\end{proof}

\begin{remark}[Rational coefficients]
  \label{rem:rational}
  The radix-9 kernel uses only rational coefficients (fractions with denominators dividing 800), enabling exact computation in rational or fixed-point arithmetic.
  This contrasts with radix-15 (Section~\ref{sec:radix15}), where numerical optimization yields only real coefficients with nonzero ``spillover'' into higher-degree terms.
  The transition from exact rational solutions (radix-5, radix-9) to approximate real solutions (radix-15) may suggest a fundamental boundary in algebraic kernel construction.
\end{remark}

%% RESOLUTION: What this achieves
Having established the 3-product construction, we now quantify its benefit.
With $\mu_9 = 3$, the update costs $C(9) = 5$ products. Iterating $t = \log_9 k$
times gives $S_k(A)$ in $5\log_9 k \approx 1.58\log_2 k$ products---a 21\%
improvement over binary and 8\% over quinary.

In summary, the radix-9 kernel achieves the best known asymptotic coefficient among exact constructions, at the cost of one additional matrix product per update.
The next section explores whether higher radices can improve further.

%% Approximate Radix-15 Kernel

\section{Approximate Radix-15 Kernel}
\label{sec:radix15}

%% OPENING: Push beyond radix-9
The radix-9 kernel is exact and achieves the degree-doubling bound
$\mu_9 = 3$ with rational coefficients. Can we go further?
For radix-15, the bound gives $\mu_{15} \geq \lceil\log_2 14\rceil = 4$.
A 4-product kernel would yield update cost $C(15) = 6$ and coefficient
$6/\log_2 15 \approx 1.54$---provided the kernel can be constructed.

%% CHALLENGE
Unlike radix-9, where 8 unknowns suffice, radix-15 has 28 free
parameters across 4 chained products---a search space too large for
ad hoc construction, requiring a systematic approach.
Expressing factor matrices as linear combinations over a polynomial
basis and matching coefficients to $T_{15}(B)$ yields a nonlinear
system with no known symbolic solution, so we turn to numerical
optimization.
Even numerically, no exact solution has been found: we therefore
relax the target, allowing nonzero \emph{spillover} into degrees
$\geq 15$.
Whether spillover terms compromise the radix-kernel framework---or
can be absorbed---is deferred to Section~\ref{sec:generalradix}.

\paragraph{Search methodology.}
We build polynomial basis sets $\mathcal{B}_i$ incrementally from
products, express each factor pair $(L_i, R_i)$ as linear combinations
over $\mathcal{B}_i$, and minimize coefficient error using L-BFGS-B.

\subsection{Circuit Structure}

Fix $P_1 = B^2$ (1~product). Define basis sets
$\mathcal{B}_1 = \{I, B, P_1\}$, and for $i = 2, 3, 4$:
\begin{align}
  L_i &= \sum_{X \in \mathcal{B}_i} \ell_{i,X} X, \quad
  R_i = \sum_{X \in \mathcal{B}_i} r_{i,X} X, \label{eq:factors} \\
  P_i &= L_i \cdot R_i, \quad
  \mathcal{B}_{i+1} = \mathcal{B}_i \cup \{P_i\}. \notag
\end{align}
The final kernel is
\begin{equation}
  \label{eq:radix15kernel}
  \widetilde{T}_{15}(B) = I + \gamma_1 B + \gamma_2 P_1
    + \gamma_3 P_2 + \gamma_4 P_3 + P_4.
\end{equation}
This uses exactly 4 matrix products: one for $P_1$, and one each
for $P_2, P_3, P_4$.

\subsection{Optimization and Results}

Let $[P(B)]_j$ denote the coefficient of $B^j$ in polynomial $P(B)$.
We minimize $\sum_{j=0}^{14}([\widetilde{T}_{15}]_j - 1)^2$ over
the 28 free parameters
$(\ell_{i,\cdot}, r_{i,\cdot}, \gamma)$ using L-BFGS-B.
The solver achieves residual $\approx 3 \times 10^{-12}$, yielding
\begin{equation*}
  \max_{0 \leq j \leq 14}
  \bigl| [\widetilde{T}_{15}(B)]_j - 1 \bigr| < 9 \times 10^{-7}.
\end{equation*}

\paragraph{Spillover.}
The construction produces nonzero higher-degree terms:
\begin{equation*}
  \widetilde{T}_{15}(B) = \sum_{j=0}^{14} B^j
    + 0.185\, B^{15} + 0.458\, B^{16} + O(B^{17}).
\end{equation*}
Since $\widetilde{T}_{15}(B) \neq T_{15}(B)$ as polynomials, this
is an \emph{approximate} kernel: $(1-z)\widetilde{T}_{15}(z) = 1 + O(z^{15})$
but not $1 - z^{15}$ exactly.
The leading spillover coefficient
$[\widetilde{T}_{15}]_{15} \approx 0.19$ controls convergence
(formalized in Section~\ref{sec:generalradix}).
The spillover arises because the 28-parameter ansatz cannot
simultaneously satisfy all 15 target constraints while zeroing
higher-degree terms.

\begin{remark}[Non-uniqueness]
  \label{rem:nonunique}
  The optimization landscape has multiple local minima: different
  random initializations yield distinct circuits with comparable
  prefix error but different spillover structure (e.g., $c$ ranging
  from $0.5$ to $0.9$ across runs).
  The specific coefficients reported here and in
  \ref{app:radix15} correspond to the best residual found
  over 200 random starts.
  The theoretical framework of Section~\ref{sec:generalradix}
  applies to any solution satisfying the prefix condition.
\end{remark}

%% RESOLUTION
The 4-product approximate kernel achieves the target coefficient
$6/\log_2 15 \approx 1.54$, but produces spillover that prevents
standard telescoping.
Whether an exact radix-15 kernel exists remains open.
The next section shows how the general radix-kernel framework
handles spillover, recovering the $1.54$ coefficient.

%% General Radix-Kernel Framework

\section{General Radix-Kernel Framework}
\label{sec:generalradix}

%% OPENING
The best known exact kernel is radix-9, with coefficient
$5/\log_2 9 \approx 1.58$.
To beat it we need a higher radix.
A 4-product radix-15 kernel would drop the coefficient to
$6/\log_2 15 \approx 1.54$.
Yet numerical search (Section~\ref{sec:radix15}) shows that exact
radix-15 kernels almost certainly do not exist: they produce nonzero
\emph{spillover} at degrees $\geq 15$.
We therefore need a framework that accepts approximate kernels while
keeping the prefix exact.

%% CHALLENGE
Spillover invalidates the telescoping identity
$T_m(A^n)(I - A^n) = I - A^{mn}$
that normally lets us extract $A^{mn}$.
With spillover the identity fails and the power cannot be recovered.
Hensel lifting ($Y' = Y(2I - MY)$) sidesteps power extraction, but
its coefficient is $2.00$---no better than binary splitting.
The question is how to keep the $1.54$ promise of radix-15 without
exact extraction.

%% ACTION
We resolve both obstacles by applying the kernel to the \emph{residual}
rather than to powers of~$A$, eliminating the need for power extraction.
We develop this in three stages: formalize approximate kernels, analyze
their composition as polynomials, then lift to matrices.

\subsection{Approximate Radix Kernels}

We first make precise what ``approximate kernel'' means and how its
error behaves.

\begin{definition}[Approximate radix-$m$ kernel]
  \label{def:approxkernel}
  A polynomial $f \in \mathbb{R}[z]$ is an \emph{approximate radix-$m$
  kernel} if $(1-z)\,f(z) = 1 + O(z^m)$.
  Equivalently, $f(z) \equiv (1-z)^{-1} \pmod{z^m}$.
  The terms of degree $\geq m$ in $f$ are called \emph{spillover}.
  When the spillover is zero, the kernel is \emph{exact} and
  coincides with $T_m(z)$.
\end{definition}

\begin{definition}[Error map]
  \label{def:errormap}
  Given an approximate radix-$m$ kernel~$f$, define
  $E(z) = 1 - (1 - z)\,f(z)$.
\end{definition}

\begin{lemma}[Error order]
  \label{lem:errororder}
  If $f(z) = \sum a_j z^j$ is an approximate radix-$m$ kernel, then
  \[
    E(z) = (1 - a_m)\,z^m + O(z^{m+1}).
  \]
  In particular, $c = 1 - [f]_m$.
  If $f$ is exact, $a_m = 0$, so $c = 1$ and $E(z) = z^m$.
\end{lemma}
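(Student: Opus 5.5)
The plan is a direct coefficient comparison in the expansion of $(1-z)f(z)$. Write $f(z)=\sum_{j\ge 0} a_j z^j$. Then
\[
  (1-z)f(z) = a_0 + \sum_{j\ge 1} (a_j - a_{j-1})\,z^j ,
\]
so the coefficient of $z^j$ in $E(z)=1-(1-z)f(z)$ is $1-a_0$ for $j=0$ and $a_{j-1}-a_j$ for $j\ge 1$. Note that the statement as printed has a sign slip here, addressed below.

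\textbf{Step 1: use the prefix condition.} By Definition~\ref{def:approxkernel}, $(1-z)f(z)=1+O(z^m)$. This forces $a_0=1$ and $a_j=a_{j-1}$ for $1\le j\le m-1$, hence $a_0=\cdots=a_{m-1}=1$. Equivalently, $f\equiv(1-z)^{-1}\pmod{z^m}$, as in the definition. Consequently every coefficient of $E$ in degrees $0,\dots,m-1$ vanishes, so $E(z)=O(z^m)$.

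\textbf{Step 2: read off the leading coefficient.} The coefficient of $z^m$ in $E$ is $a_{m-1}-a_m = 1-a_m$. This gives
\[
  E(z)=(1-a_m)z^m+O(z^{m+1}),
\]
and, writing $c$ for the leading coefficient of $E$, we get $c=1-[f]_m$.

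\textbf{Step 3: the exact case.} If $f$ is exact, then $f=T_m$ has no spillover, so $a_m=0$ and $c=1$. The telescoping identity $T_m(z)(1-z)=1-z^m$ then gives $E(z)=z^m$ exactly, with no higher-order terms.

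There is no real obstacle; everything is elementary coefficient bookkeeping. The only care needed is the sign convention. The phrase ``$a_m=0$ so $c=1$'' in the exact case is consistent with $c=1-a_m$ only as written; it is inconsistent if one instead reads the spillover as entering with the opposite sign. I would state the identity $[E]_m=a_{m-1}-a_m$ explicitly so the convention is unambiguous. Applied to the radix-15 kernel, this gives $c\approx 1-0.185$.
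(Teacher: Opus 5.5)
Your proof is correct and is the same coefficient comparison the paper uses. You also make two things explicit that the paper leaves tacit: that the prefix condition forces $a_0=\cdots=a_{m-1}=1$, and that telescoping gives $E(z)=z^m$ exactly in the exact case. Your caveat about a sign slip is unfounded, though: your own computation $[E]_m=a_{m-1}-a_m=1-a_m$ agrees with the statement as printed, so that remark should be dropped.
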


\begin{proof}
  Since $(1-z)f(z) = 1 + O(z^m)$, the degree-$m$ term is
  $(1-z)f(z) = 1 + (a_m - a_{m-1})\,z^m + \cdots = 1 + (a_m - 1)\,z^m + \cdots$,
  so $E(z) = 1 - (1-z)f(z) = (1 - a_m)\,z^m + O(z^{m+1})$.
\end{proof}

\subsection{General Radix-Kernel Summation}

The key idea is to apply the kernel to its own error: $f(E(z))$
corrects the residual left by $f(z)$.

\begin{definition}[General radix-kernel summation]
  \label{def:radixsum}
  Given an approximate radix-$m$ kernel $f$ with error map $E$,
  one \emph{lift} of the general radix-kernel summation produces
  \begin{equation}
    \label{eq:radixsum}
    f(z) \cdot f\bigl(1 - (1-z)f(z)\bigr) = f(z) \cdot f(E(z)).
  \end{equation}
\end{definition}

\begin{lemma}[Composition identity]
  \label{lem:composition}
  For any approximate radix-$m$ kernel $f$ with error map $E$:
  \begin{equation}
    \label{eq:composition}
    E(E(z)) = 1 - (1-z)\,f(z)\,f(E(z)).
  \end{equation}
\end{lemma}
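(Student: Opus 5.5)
The plan is to prove the identity by direct algebraic substitution, using only the defining relation of the error map (Definition~\ref{def:errormap}). The key observation is that $E$ is defined for every argument, not just $z$. Substituting $E(z)$ for $z$ in $E(w) = 1 - (1-w)f(w)$ gives
\[
  E(E(z)) = 1 - \bigl(1 - E(z)\bigr)\,f(E(z)).
\]
This is a polynomial identity in $\mathbb{R}[z]$, valid because composition of polynomials is well defined.

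Next, we eliminate the factor $1 - E(z)$. Rearranging Definition~\ref{def:errormap} gives $1 - E(z) = (1-z)f(z)$. Substituting this into the display above yields
\[
  E(E(z)) = 1 - (1-z)\,f(z)\,f(E(z)).
\]
This is exactly \eqref{eq:composition}. The manipulations take place in the commutative ring $\mathbb{R}[z]$, so the factors may be reordered freely.

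The only step that needs any care is the legitimacy of the substitution $z \mapsto E(z)$, and this is routine. Composition is a ring homomorphism $\mathbb{R}[w] \to \mathbb{R}[z]$ sending $w \mapsto E(z)$, so the identity defining $E$ persists under it. No hypothesis on $f$ beyond its being a polynomial is used: the approximate-kernel property $(1-z)f(z) = 1 + O(z^m)$ is not required for \eqref{eq:composition} itself.

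That property matters only for the consequence we will draw afterwards. Combining \eqref{eq:composition} with Lemma~\ref{lem:errororder}, $E(z) = c z^m + O(z^{m+1})$ implies $E(E(z)) = c^{m+1} z^{m^2} + O(z^{m^2+1})$. Hence $f(z)f(E(z))$ is an approximate radix-$m^2$ kernel, and this is what justifies iterating the lift in \eqref{eq:radixsum}.
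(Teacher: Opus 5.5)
Your proof is correct and is the paper's argument: evaluate the defining relation of $E$ at $E(z)$ to get $E(E(z)) = 1 - (1-E(z))\,f(E(z))$, then substitute $1 - E(z) = (1-z)f(z)$. You add two accurate points the paper does not state explicitly: the substitution is justified because it is a ring homomorphism, and the identity holds for any polynomial $f$, without the approximate-kernel hypothesis.
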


\begin{proof}
  By definition, $1 - E(z) = (1-z)\,f(z)$. Applying the definition
  of $E$ at the point $E(z)$:
  \[
    E(E(z)) = 1 - (1-E(z))\,f(E(z)) = 1 - (1-z)\,f(z)\,f(E(z)).
    \qedhere
  \]
\end{proof}

\begin{lemma}[Prefix growth]
  \label{lem:prefixgrowth}
  If $f$ is an approximate radix-$m$ kernel, then
  $f(z) \cdot f(E(z))$ matches the first $m^2$ coefficients of
  $(1-z)^{-1}$.
\end{lemma}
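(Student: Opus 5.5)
The plan is to rewrite the claim about $f(z)f(E(z))$ as a claim about the order of vanishing of $E(E(z))$, and then bound that order using Lemma~\ref{lem:errororder}. By Lemma~\ref{lem:composition},
\[
  (1-z)\,f(z)\,f(E(z)) = 1 - E(E(z)).
\]
Saying that $g(z) := f(z)f(E(z))$ matches $(1-z)^{-1}$ through degree $m^2-1$ is the same as saying $(1-z)g(z) = 1 + O(z^{m^2})$. This equivalence holds because multiplication by $(1-z)$, and by its formal inverse $\sum_j z^j$, preserves congruences modulo $z^{m^2}$ in $\mathbb{R}[[z]]$. So it suffices to show $E(E(z)) = O(z^{m^2})$.

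The key steps, in order, are as follows.

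\textbf{Step 1.} Lemma~\ref{lem:errororder} gives $E(z) = c\,z^m + O(z^{m+1})$. In particular $E(z) = z^m h(z)$ for a polynomial $h$, so $E$ has zero constant term.

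\textbf{Step 2.} Write $E(w) = \sum_{j\ge m} e_j w^j$, which is a finite sum since $E$ is a polynomial. Substituting $w = E(z)$ gives
\[
  E(E(z)) = \sum_{j \ge m} e_j\, E(z)^j .
\]
Each term $E(z)^j = z^{mj} h(z)^j$ is divisible by $z^{mj}$, and $mj \ge m^2$ for every $j \ge m$. Hence $E(E(z)) = O(z^{m^2})$. In fact the leading term is $c^{m+1} z^{m^2}$, which is worth noting for the later convergence analysis.

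\textbf{Step 3.} Conclude that $(1-z)g(z) \equiv 1 \pmod{z^{m^2}}$. Multiplying by $(1-z)^{-1}$ in the formal power series ring then gives $g(z) \equiv \sum_{j=0}^{m^2-1} z^j \pmod{z^{m^2}}$. These are exactly the first $m^2$ coefficients of $(1-z)^{-1}$.

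I expect no serious obstacle here. The only point that needs care is the substitution in Step 2. It is legitimate, and the valuation argument works, precisely because $E(0)=0$, which follows from $f(0)=1$. That $f(0)=1$ is itself the degree-$0$ case of the kernel condition $(1-z)f(z)=1+O(z^m)$. Without $E(0)=0$, composing with $E$ could produce low-degree contributions.

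The translation between ``$(1-z)g \equiv 1$'' and ``$g$ matches the prefix'' should be stated explicitly, since the lemma's conclusion is phrased in terms of coefficients. Concretely, $[g]_j = \sum_{i\le j}[(1-z)g]_i$, so the coefficients of $g$ are the partial sums of the coefficients of $(1-z)g$. If $(1-z)g = 1 + O(z^{m^2})$, every partial sum with $j < m^2$ equals $1$.
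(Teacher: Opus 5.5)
Your proof is correct and takes the same route as the paper. Both use the composition identity $(1-z)f(z)f(E(z)) = 1 - E(E(z))$, deduce $E(E(z)) = O(z^{m^2})$ from $E(z)=O(z^m)$, and then divide by $(1-z)$; your Step~2 valuation argument and the partial-sum translation just spell out what the paper asserts in one line each (and what that argument actually uses is that both the inner and outer copies of $E$ vanish to order $m$, which is more than $E(0)=0$ alone).
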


\begin{proof}
  By Lemma~\ref{lem:composition},
  $(1-z)\,f(z)\,f(E(z)) = 1 - E(E(z))$.
  By Lemma~\ref{lem:errororder}, $E(z) = O(z^m)$, so
  $E(E(z)) = O(z^{m^2})$.
  Hence $(1-z)\,f(z)\,f(E(z)) = 1 - O(z^{m^2})$, and
  $f(z)\,f(E(z))$ agrees with $(1-z)^{-1}$ to order $m^2 - 1$.
\end{proof}

\begin{lemma}[Error propagation]
  \label{lem:errorprop}
  Write $E^{[n]}$ for the $n$-fold composition
  ($E^{[0]}(z) = z$, $E^{[n+1]} = E \circ E^{[n]}$).
  Let $E(z) = c\,z^m + O(z^{m+1})$. Then
  \[
    E^{[n]}(z) = c^{(m^n - 1)/(m-1)}\,z^{m^n} + O(z^{m^n+1}).
  \]
  Spillover is harmless to the prefix: regardless of $c$, it is
  pushed to degree $\geq m^n$ after $n$ steps.
\end{lemma}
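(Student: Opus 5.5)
We argue by induction on $n$, using only the leading-term form $E(z) = c\,z^m + O(z^{m+1})$ from Lemma~\ref{lem:errororder}. Set $e_n = (m^n-1)/(m-1)$, so that $e_0 = 0$ and $e_{n+1} = m\,e_n + 1$. The base case $n=0$ is trivial: $E^{[0]}(z) = z = c^{0} z^{1}$. The case $n=1$ is exactly the hypothesis on $E$.

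For the inductive step we use $E^{[n+1]} = E \circ E^{[n]}$. Suppose
\[
  E^{[n]}(z) = c^{e_n} z^{m^n} + O(z^{m^n+1}) =: w .
\]
Then $w = O(z^{m^n})$. Substituting into $E(w) = c\,w^m + O(w^{m+1})$ gives two pieces to control.

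First, the tail satisfies $O(w^{m+1}) = O(z^{(m+1)m^n})$. This is $O(z^{m^{n+1}+1})$, since $(m+1)m^n \ge m^{n+1}+1$ for $m,n\ge 1$.

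Second, expand $w^m = \bigl(c^{e_n} z^{m^n} + r\bigr)^m$ with $r = O(z^{m^n+1})$ by the binomial theorem. The leading term is $c^{m e_n} z^{m^{n+1}}$. Every other term carries at least one factor of $r$ together with $m-1$ factors of order $\ge m^n$. Its degree is therefore at least $(m-1)m^n + m^n + 1 = m^{n+1}+1$.

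Hence $E^{[n+1]}(z) = c^{1+m e_n} z^{m^{n+1}} + O(z^{m^{n+1}+1})$. The recursion $1+m e_n = e_{n+1}$ closes the induction.

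The main care point is the bookkeeping of the $O(\cdot)$ terms. Composition of formal power series is well-defined here because $E(0)=0$: $E$ has no constant term since $m\ge 1$. We should also note that the statement is about formal power series (or polynomials), so big-$O$ means "divisible by". If $c=0$ the formula still holds, with the leading term vanishing and the error of even higher order. The final remark, that spillover is harmless to the prefix, is then immediate: $E^{[n]}(z)=O(z^{m^n})$ for any $c$. Combined with the identity of Lemma~\ref{lem:composition} iterated $n$ times, this shows the lifted product agrees with $(1-z)^{-1}$ up to degree $m^n-1$.
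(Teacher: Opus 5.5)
Your proof is correct and follows the same route as the paper: induction on $n$ through $E^{[n+1]} = E \circ E^{[n]}$, with the leading exponent obeying $e_{n+1} = m e_n + 1$. The only difference is that you write out the error-term bookkeeping (the binomial expansion of $w^m$ and the tail bound $(m+1)m^n \ge m^{n+1}+1$), which the paper's sketch leaves inside the ``$\cdots$''.
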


\begin{proof}
  $E(E(z)) = c\,(cz^m)^m + \cdots = c^{m+1}\,z^{m^2} + \cdots$.
  By induction, $E^{[n]}(z) =
  c^{1+m+\cdots+m^{n-1}}\,z^{m^n} + \cdots = c^{(m^n-1)/(m-1)}\,z^{m^n}
  + \cdots$.
\end{proof}

\begin{remark}[Analytic stability]
  \label{rem:stability}
  In floating-point arithmetic, the coefficient magnitude matters.
  When $|c| < 1$, the leading coefficient $c^{(m^n-1)/(m-1)}$ decays,
  bounding the residual norm.
  For the radix-15 kernel (Section~\ref{sec:radix15}),
  $[f]_{15} \approx 0.19$ gives $c \approx 0.81 < 1$.
\end{remark}

\subsection{Matrix Iteration}

We now translate this polynomial machinery to matrices.
Let $M = I - A$. We maintain an approximate inverse
$Y_n \approx M^{-1}$ and its residual $R_n = I - MY_n$.
Starting from $Y_0 = I$ and $R_0 = A$, each step applies $f$
to the residual:
\begin{equation}
  \label{eq:generalradix}
  Y_{n+1} = Y_n \cdot f(R_n), \qquad R_{n+1} = I - M\,Y_{n+1}.
\end{equation}

\begin{lemma}[General radix-kernel iteration]
  \label{lem:generaliter}
  Under iteration~\eqref{eq:generalradix}:
  \begin{enumerate}
    \item $R_n = E^{[n]}(A)$;
    \item $R_n = O(A^{m^n})$;
    \item $Y_n = \sum_{j=0}^{m^n - 1} A^j + O(A^{m^n})$.
  \end{enumerate}
\end{lemma}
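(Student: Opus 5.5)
The plan is to prove item~1 by induction on $n$, using only that every matrix involved is a polynomial in $A$ and hence commutes with every other. Items~2 and~3 then follow from Lemma~\ref{lem:errorprop} and Lemma~\ref{lem:composition}.

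\textbf{Step 1 (residual recursion).} First I show that $R_{n+1} = E(R_n)$. Since $M = I-A$ and all iterates are polynomials in $A$, the product $M Y_n$ commutes with $f(R_n)$, and $M Y_n = I - R_n$. Therefore
\[
R_{n+1} = I - M Y_n f(R_n) = I - (I - R_n) f(R_n) = E(R_n),
\]
by Definition~\ref{def:errormap}. This is the matrix analogue of the manipulation in the proof of Lemma~\ref{lem:composition}. It relies on substitution $p \mapsto p(X)$ being a ring homomorphism from $\mathbb{R}[z]$ onto the commutative algebra $\mathbb{R}[A]$.

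\textbf{Step 2 (item 1).} The base case is $R_0 = A = E^{[0]}(A)$. If $R_n = E^{[n]}(A)$, Step~1 gives $R_{n+1} = E(E^{[n]}(A)) = E^{[n+1]}(A)$. Composition of polynomials is compatible with evaluation at $A$, so this completes the induction.

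\textbf{Step 3 (items 2 and 3).} Here I interpret ``$O(A^{N})$'' to mean ``a polynomial in $A$ whose scalar polynomial lies in $z^N\mathbb{R}[z]$''; this is the only subtle point, since matrix big-O must be read through the polynomial. By Lemma~\ref{lem:errorprop}, $E^{[n]}(z) \in z^{m^n}\mathbb{R}[z]$, which together with item~1 gives item~2.

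For item~3, I show by the same induction that $Y_n = g_n(A)$, where $g_0 = 1$, $g_{n+1} = g_n \cdot f(E^{[n]})$, and $(1-z)g_n(z) = 1 - E^{[n]}(z)$. The inductive step is exactly the computation in Lemma~\ref{lem:composition} with $z$ replaced by $E^{[n]}(z)$:
\[
(1-z)\,g_n\,f(E^{[n]}) = \bigl(1 - E^{[n]}\bigr)\,f(E^{[n]}) = 1 - E^{[n+1]}.
\]
Since $1-z$ is invertible in $\mathbb{R}[[z]]$, it follows that $g_n = (1-z)^{-1}\bigl(1 - E^{[n]}(z)\bigr) = \sum_{j<m^n} z^j + O(z^{m^n})$. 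Evaluating at $A$ gives item~3.

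The main obstacle is bookkeeping rather than substance: one must justify the commutations and make precise what $O(A^{N})$ means for matrices. I expect both to be handled cleanly by working throughout in the commutative algebra $\mathbb{R}[A]$.
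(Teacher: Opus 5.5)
Your proof is correct and follows the paper's route. Item~1 is the same induction, $R_{n+1} = I - (MY_n)f(R_n) = I - (I-R_n)f(R_n) = E(R_n)$; associativity alone gives this, so the commutation you invoke is harmless but unnecessary. Items~2--3 rest, as in the paper, on $E(z)=O(z^m)$ together with $MY_n = I - R_n$. Your polynomial-level induction $(1-z)g_n = 1 - E^{[n]}$, with $1-z$ inverted in $\mathbb{R}[[z]]$, is a more careful version of the paper's one-line ``$MY_n = I - O(A^{m^n})$'' step. It has the small advantage of fixing the meaning of $O(A^N)$ and of not needing $M$ to be invertible or any series to converge.
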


\begin{proof}
  \emph{Base case.} $R_0 = A = E^{[0]}(A)$.
  \emph{Induction.} If $R_n = E^{[n]}(A)$, then
  $R_{n+1} = I - M\,Y_n\,f(R_n) = I - (I - R_n)\,f(R_n)
   = E(R_n) = E^{[n+1]}(A)$.
  Parts~(2)--(3) follow from $E(z) = O(z^m)$ and
  $M\,Y_n = I - R_n = I - O(A^{m^n})$.
\end{proof}

%% Cost Analysis and Comparison (continuation of Section 5)

\subsection{Cost Analysis}
\label{sec:comparison}

We now quantify the cost and compare against prior methods.
Each step~\eqref{eq:generalradix} requires $\mu_m$ products for
$f(R_n)$, $1$ for $Y_{n+1} = Y_n \cdot f(R_n)$, and $1$ for
$R_{n+1} = I - M\,Y_{n+1}$: total $\mu_m + 2$ per
step---identical to the exact framework.
Achieving accuracy $k = m^t$ requires $t = \log_m k$ steps:
\begin{equation}
  \label{eq:generalcost}
  (\mu_m + 2)\log_m k
  = \frac{\mu_m + 2}{\log_2 m}\,\log_2 k.
\end{equation}

\begin{corollary}[Radix-15 coefficient]
  \label{cor:radix15coeff}
  If there exists an approximate radix-15 kernel with $\mu_{15} = 4$
  satisfying $(1-z)f(z) = 1 + O(z^{15})$, then the iteration
  achieves coefficient $6/\log_2 15 \approx 1.54$.
\end{corollary}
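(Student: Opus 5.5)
The plan is to read the corollary as a direct specialization of Lemma~\ref{lem:generaliter} combined with the per-step cost count in~\eqref{eq:generalcost}. The argument has two parts: correctness (the iteration reaches the target accuracy after the claimed number of steps) and cost (each step uses the claimed number of products).

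\textbf{Correctness.} Let $f$ be the hypothesized approximate radix-15 kernel, for instance $\widetilde{T}_{15}$ from~\eqref{eq:radix15kernel}. By Definition~\ref{def:approxkernel}, it satisfies $(1-z)f(z)=1+O(z^{15})$. Lemma~\ref{lem:errororder} then gives $E(z)=O(z^{15})$. Lemma~\ref{lem:generaliter}, with $m=15$, gives $Y_t=\sum_{j=0}^{15^t-1}A^j+O(A^{15^t})$. So after $t$ steps, $Y_t$ agrees with $S_{15^t}(A)$ in every coefficient below degree $15^t$. Taking $t=\lceil\log_{15}k\rceil$ reproduces the first $k$ terms of the series. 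Lemma~\ref{lem:errorprop} shows this holds regardless of the spillover constant $c$, so no condition on $c$ is needed for the statement.

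\textbf{Cost.} I will count products per step of~\eqref{eq:generalradix}:
\begin{itemize}
\item $f(R_n)$ costs $\mu_{15}=4$ products, by evaluating the circuit of Section~\ref{sec:radix15} with $B=R_n$.
\item $Y_{n+1}=Y_n f(R_n)$ costs one product.
\item $R_{n+1}=I-MY_{n+1}$ costs one product, since $M=I-A$ is available and the subtraction is free.
\end{itemize}
This gives $6$ products per step. Multiplying by $t=\log_{15}k$ steps and using~\eqref{eq:generalcost} gives $6\log_{15}k=(6/\log_2 15)\log_2 k\approx 1.54\log_2 k$.

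The main subtlety is that ``$O(A^{m^n})$'' in Lemma~\ref{lem:generaliter} is a formal-power-series statement in $A$, not a norm bound. I would state explicitly that the ``coefficient'' in the corollary refers to product count for matching the first $k$ terms, and not to floating-point accuracy. For the latter I would defer to Remark~\ref{rem:stability}, which uses $|c|\approx 0.81<1$.

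Two minor points remain. First, when $k$ is not a power of $15$, the ceiling adds at most one step, which is $O(1)$ products and does not affect the leading coefficient. Second, the first step can use $R_0=A$ directly, which saves one product but again does not change the asymptotics.
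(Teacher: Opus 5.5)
Your proposal is correct and follows the paper's own route: Lemma~\ref{lem:generaliter} gives the $15^t$-term prefix, and the $4+1+1$ per-step product count behind~\eqref{eq:generalcost} gives $6\log_{15}k=(6/\log_2 15)\log_2 k$. One small correction: $\widetilde{T}_{15}$ is not literally an instance of the hypothesis, because its prefix coefficients match $1$ only to within $9\times 10^{-7}$ (the paper notes right after the corollary that it satisfies the condition only approximately); since the corollary is conditional, this does not affect your argument.
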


Our numerically optimized kernel (\ref{app:radix15}) satisfies
this condition approximately, with prefix error
$\max_{0 \leq j \leq 14}|[f]_j - 1| < 9 \times 10^{-7}$.
Numerical experiments (Section~\ref{sec:numerics}) confirm that it
exhibits the predicted $m^n$ prefix growth.

Table~\ref{tab:compare} compares all methods against prior work.
Prior methods were limited by naive kernel computation, making higher
radices counterproductive and leaving coefficients at or above $2\log_2 k$.

\begin{table}[t]
  \centering
  \caption{Asymptotic product counts for $S_k(A)$ evaluation}
  \label{tab:compare}
  \begin{tabular}{@{}lccc@{}}
    \toprule
    Method & Products & Coeff.\ in $\log_2 k$ & Explicit? \\
    \midrule
    Westreich~\cite{westreich1989} & $3\log_2 k$ & $3.00$ & Yes \\
    Lei--Nakamura~\cite{lei1992} & $2\log_2 k - 1$ & $2.00$ & Yes \\
    Dimitrov--Cooklev~\cite{dimitrov1995} & hybrid & $\approx 1.9$ & Yes \\
    Matsumoto et al.~\cite{matsumoto2018} & higher radix & $< 2.0$ & No \\
    \midrule
    Quinary~\cite{gustafsson2017} & $4\log_5 k$ & $1.72$ & Yes \\
    Radix-9 (this paper) & $5\log_9 k$ & $1.58$ & Yes (exact) \\
    Radix-15 (this paper) & $6\log_{15} k$ & $1.54$ & Yes (approx.) \\
    \bottomrule
  \end{tabular}
\end{table}

The radix-9 scheme achieves 21\% fewer products than
Lei--Nakamura on powers of~9.
The gap widens as $k$ grows: for $k = 9^{10} \approx 3.5 \times 10^9$,
radix-9 requires 50 products versus 63 for binary splitting,
a saving of 13~GEMMs.

The optimization principle: choose $m$ to minimize $C(m)/\log_2 m$.
For our kernels: $4/\log_2 5 \approx 1.72$ (quinary),
$5/\log_2 9 \approx 1.58$ (radix-9),
$6/\log_2 15 \approx 1.54$ (radix-15).
Higher radices help only if kernel cost grows sublinearly in $\log_2 m$.

\subsection{Special Cases}

The framework recovers known methods as special cases.

\paragraph{Hensel lifting ($m = 2$).}
$f(z) = 1 + z$, $E(z) = z^2$, $\mu_2 = 0$:
$Y_{n+1} = Y_n(I + R_n) = Y_n(2I - MY_n)$,
coefficient $2/\log_2 2 = 2.00$.

\paragraph{Exact radix-$m$.}
$f = T_m$, $E(z) = z^m$, $R_n = A^{m^n}$: recovers the standard
telescoping update.

%% RESOLUTION
The framework resolves both obstacles by applying the kernel to the
residual, not to powers of $A$.
The cost per step is $\mu_m + 2$---identical to the exact case---and
the prefix grows as $m^n$ regardless of spillover.
Hensel lifting and exact telescoping emerge as special cases, so the
framework subsumes all prior methods.
With radix-15 it achieves coefficient $1.54$, the best known rate.

%% Numerical Experiments - Compact version

\section{Numerical Experiments}
\label{sec:numerics}

%% OPENING: What we validate and why
Theoretical product counts are meaningless if numerical errors accumulate.
We validate the proposed radix-kernel methods on approximate inverse computation.
All computations use Python (NumPy/OpenBLAS) with IEEE 754 double precision arithmetic.
To isolate the effect of algorithm choice, we use a standard random matrix model.

\paragraph{Setup.}
We generate random matrices $A \in \mathbb{R}^{d \times d}$ with $\rho(A) < 1$ by
constructing $A = \alpha \Omega D \Omega^T$ where $\Omega$ is an orthogonal matrix, $D$ is diagonal with
entries in $[-1, 1]$, and $\alpha < 1$. For each $k = m^t$ (a pure power), we
compare: (i) Lei--Nakamura binary splitting using $S_{2n} = S_n(I + A^n)$ with
2~products per doubling, giving $2\lceil\log_2 k\rceil$ total; (ii) the quinary update~\cite{gustafsson2017};
(iii) our radix-9 update (Theorem~\ref{thm:radix9}); and (iv) our radix-15 approximate update
via the general framework (Section~\ref{sec:generalradix}).
All methods are restricted to $k = m^t$ for fair comparison at pure powers.
We measure accuracy by the Frobenius-norm residual $\|I - (I-A)S_k\|_F$, where $\|\cdot\|_F = (\sum_{ij} a_{ij}^2)^{1/2}$, and report the condition number $\kappa(\cdot)$ as the ratio of largest to smallest singular values.

\paragraph{Results.}
Table~\ref{tab:results} shows matrix product counts for $d = 500$ and $\alpha = 0.9$.
Radix-9 uses 25\% fewer products than binary for $k = 729$,
and radix-15 achieves the same 25\% saving at $k = 225$.
Fewer products means fewer rounding errors accumulate---but does the more complex
kernel structure introduce new error sources?

\begin{table}[t]
  \centering
  \caption{Product counts for $S_k(A)$ at pure powers (binary baseline)}
  \label{tab:results}
  \begin{tabular}{@{}lccccc@{}}
    \toprule
    $k$ & Binary & Quinary & Radix-9 & Radix-15 & Savings \\
    \midrule
    $125 = 5^3$ & 14 & \textbf{12} & --- & --- & 14\% \\
    $625 = 5^4$ & 20 & \textbf{16} & --- & --- & 20\% \\
    $81 = 9^2$ & 14 & --- & \textbf{10} & --- & 29\% \\
    $729 = 9^3$ & 20 & --- & \textbf{15} & --- & 25\% \\
    $225 = 15^2$ & 16 & --- & --- & \textbf{12} & 25\% \\
    $3375 = 15^3$ & 24 & --- & --- & \textbf{18} & 25\% \\
    \bottomrule
  \end{tabular}
\end{table}

%% Residuals and convergence
The exact methods (binary, quinary, radix-9) achieve comparable residuals
($\approx 10^{-14}$) for a given $k$, confirming exact algebraic identities.
Radix-15, using an approximate kernel with prefix error $< 9 \times 10^{-7}$,
converges to a residual floor of $\approx 10^{-6}$ rather than machine precision;
this is expected since the kernel coefficient error limits achievable accuracy.
Figure~\ref{fig:convergence} shows the normalized residual
$\|I - (I-A)S_k\|_F / \sqrt{d}$ versus matrix product count ($d=64$,
$\kappa(I-A) = 10^{4}$, log-spaced eigenvalues).
Higher-radix methods achieve a given accuracy with fewer products.

\begin{figure}[t]
  \centering
  \includegraphics[width=0.85\linewidth]{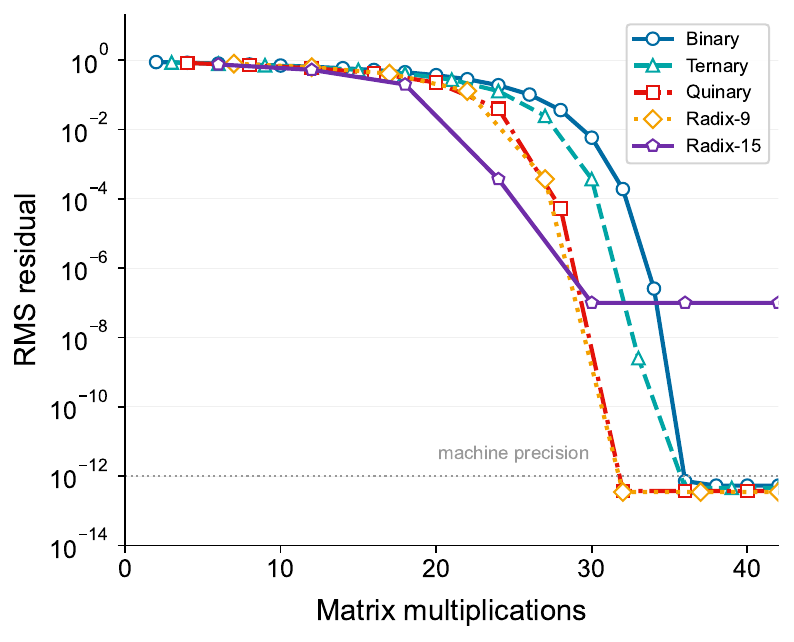}
  \caption{Convergence comparison ($d=64$, $\kappa(I-A) = 10^{4}$, log-spaced
    eigenvalues). Higher-radix methods reach machine precision with fewer matrix products.}
  \label{fig:convergence}
\end{figure}

Beyond accuracy, we examine computational efficiency (asymptotic coefficients are compared in Figure~\ref{fig:coefficients}).
Table~\ref{tab:timing} confirms that product-count reductions translate to
wall-clock savings.
We also test on nonnormal matrices (upper triangular with controlled
eigenvalues). At convergence (high $k$), all methods achieve comparable
residuals, confirming the algorithms are not specific to normal matrices.
Nonnormal residuals are larger at low~$k$ due to transient growth, not
algorithmic instability---increasing $k$ restores accuracy.

\begin{table}[t]
  \centering
  \caption{Timing (ms) and residuals for $d=500$, $\rho(A)=0.9$.
    Timings are medians of 10 runs. Residuals are $\|I - (I-A)S_k\|_F$.}
  \label{tab:timing}
  \begin{tabular}{@{}lrccc@{}}
    \toprule
    Method & $k$ & Time & Res (normal) & Res (nonnormal) \\
    \midrule
    Binary & 128 & 8 & $2.5 \times 10^{-6}$ & $56$ \\
    Binary & 512 & 10 & $1.1 \times 10^{-13}$ & $8.6 \times 10^{-8}$ \\
    Quinary & 125 & 8 & $3.4 \times 10^{-6}$ & $57$ \\
    Quinary & 625 & 10 & $8.6 \times 10^{-14}$ & $1.0 \times 10^{-11}$ \\
    Radix-9 & 81 & 9 & $4.0 \times 10^{-4}$ & $37$ \\
    Radix-9 & 729 & 13 & $7.4 \times 10^{-14}$ & $3.1 \times 10^{-13}$ \\
    Radix-15 & 225 & 18 & $2.3 \times 10^{-6}$ & $2.5$ \\
    Radix-15 & 3375 & 25 & $2.2 \times 10^{-6}$ & $2.4 \times 10^{-6}$ \\
    \bottomrule
  \end{tabular}
\end{table}
For exact methods, residuals depend on $k$ (series length), not the method.
The radix-15 approximate kernel converges to a floor of $\approx 10^{-6}$
set by the kernel precision ($\max|[f]_j - 1| < 9 \times 10^{-7}$).

\begin{figure}[t]
  \centering
  \includegraphics[width=0.75\linewidth]{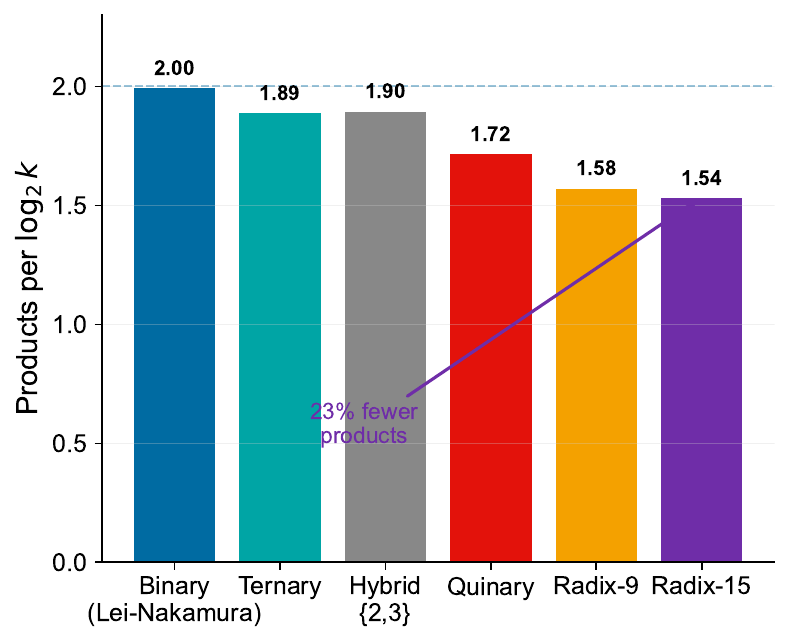}
  \caption{Asymptotic coefficients (matrix products per $\log_2 k$) for each method.
    Lower is better. Radix-15 achieves the best rate at $1.54$.}
  \label{fig:coefficients}
\end{figure}

\paragraph{Stability.}
The exact methods (binary, quinary, radix-9) achieve residuals near machine
precision for $\rho(A) \in [0.5, 0.95]$ and $\kappa(I-A)$ up to $10^4$.
Radix-15's residual floor ($\approx 10^{-6}$) is an inherent property of
the approximate kernel, not numerical instability; a higher-precision kernel
search would lower this floor.

\paragraph{Summary.}
These experiments confirm that the theoretical product savings translate to
practice: radix-9 and radix-15 achieve the same accuracy as binary with
25\% fewer products.
The exact methods reach machine precision; the approximate radix-15 kernel
trades a modest accuracy floor for the best asymptotic coefficient ($1.54$).

%% Related Work - Positioned before conclusion
%% OCAR: Opening (context), Challenge (limitations of prior work),
%%       Action (how we differ), Resolution (our advantage)

\section{Related Work}
\label{sec:related}

%% OPENING: Neumann series evaluation history
%% Sentence chain: validation → history → first algorithm → improvements → claim

Having validated our methods empirically, we now position them in context.
The problem of evaluating $S_k(A)$ efficiently spans three decades.
The first practical algorithm, by Westreich~\cite{westreich1989}, achieved
$3\log_2 k$ matrix products via binary splitting. Lei and Nakamura~\cite{lei1992}
improved this to $2\log_2 k - 1$ by sharing intermediate computations. Dimitrov
and Cooklev~\cite{dimitrov1995} introduced hybrid radix-$\{2,3\}$ methods using
double-base number representations, but claimed that higher radices offer no
further improvement.

%% CHALLENGE: The claim was wrong
%% Sentence chain: contradiction → evidence → gap

This claim was contradicted by Matsumoto, Takagi, and Takagi~\cite{matsumoto2018},
who demonstrated that higher radices \emph{do} yield more efficient algorithms.
Their analysis showed improved asymptotic constants are achievable. However, they
provided no explicit kernel circuits or analysis of which radix values are optimal.
To fill this gap, we adapt techniques from optimal polynomial evaluation---specifically, composition methods that surpass the Paterson--Stockmeyer bound.

%% ACTION: General polynomial evaluation context
%% Sentence chain: PS → optimality → Sastre → connection

For general matrix polynomials, the Paterson--Stockmeyer (PS) method~\cite{paterson1973}
achieves $O(\sqrt{n})$ products for degree-$n$ evaluation.
This bound was proved optimal within the PS evaluation scheme~\cite{fasi2019}.
However, it can be surpassed for specific polynomials using composition methods---Sastre and Ib\'a\~nez~\cite{sastre2021} achieved degree~15 in 4~products, exceeding the PS bound of degree~9 achievable with 4~products.

%% RESOLUTION: How we differ and what we add
%% Sentence chain: our contribution → what's new → significance

Our radix kernels adapt these composition ideas to the geometric series structure, extending Gustafsson et al.'s quinary construction~\cite{gustafsson2017} to higher radices.
The radix-9 kernel is the first published exact circuit beyond radix-5, and our kernel-to-update framework unifies algorithm design: better kernels automatically yield better series algorithms.

%% Conclusion - Ties back to introduction

\section{Conclusion}
\label{sec:conclusion}

%% Summary: Echo the challenge and contributions from introduction
We set out to extend Neumann series evaluation beyond the known radix-5
kernel~\cite{gustafsson2017}, where no higher-radix constructions existed.
We addressed this gap with three contributions:
\begin{enumerate}
  \item The radix-9 kernel achieves $\mu_9 = 3$ with rational
    coefficients---the first exact construction beyond radix-5---yielding
    coefficient $1.58\log_2 k$ (21\% over binary splitting).
  \item A general radix-kernel framework (Section~\ref{sec:generalradix})
    enables iterative use of approximate kernels despite spillover.
  \item The approximate radix-15 kernel achieves coefficient $1.54\log_2 k$
    within this framework.
\end{enumerate}

%% Future: Brief forward look
The transition from exact (radix-9) to approximate (radix-15) suggests
that spillover is the norm at higher radices. The general radix-kernel
framework provides a principled approach for this regime.
This motivates several directions:
\begin{enumerate}
  \item \emph{Exact vs.\ approximate boundary}: Can we prove that exact
    radix-15 kernels do not exist, or find one?
  \item \emph{Higher radices}: Can approximate kernels with $\mu_m$
    growing sublogarithmically improve the coefficient beyond $1.54$?
  \item \emph{Structured matrices}: How do these algorithms extend to
    sparse or Toeplitz settings?
\end{enumerate}

These techniques may also benefit log-determinant estimation~\cite{sao2025logdet},
where polynomial traces from radix-kernel sums could replace individual
power traces while reducing the number of matrix products.

\section*{Acknowledgements}
This work was supported by the Laboratory Directed Research and Development
Program of Oak Ridge National Laboratory, managed by UT-Battelle, LLC,
for the U.S. Department of Energy.

%% Bibliography
\paperbibliography

%% Appendix
\clearpage
\appendix
%% Appendix - Kernel Coefficients and Numerical Validation

\section{Kernel Coefficients}
\label{app:coeffs}

This appendix provides explicit formulas for the optimized kernels and
numerical validation on test matrices.

\subsection{Quinary Kernel (2 Products)}

The quinary kernel $T_5(B) = I + B + B^2 + B^3 + B^4$ is computed using
2~matrix products:
\begin{align}
  U &= B^2, \label{eq:quinary-U} \\
  V &= U(B + U) = B^3 + B^4, \label{eq:quinary-V} \\
  T_5(B) &= I + B + U + V. \label{eq:quinary-final}
\end{align}
This is exact: all 5~coefficients equal~1.

\subsection{Radix-9 Kernel (3 Products)}

The radix-9 kernel $T_9(B) = \sum_{j=0}^{8} B^j$ uses 3~products:
\begin{align}
  U &= B^2, \label{eq:radix9-U} \\
  V &= U(B + 2U) = B^3 + 2B^4, \label{eq:radix9-V} \\
  P &= (0.15B + 2U + V)(0.275B - 0.125U + 0.25V), \label{eq:radix9-P} \\
  T_9(B) &= I + B + \tfrac{767}{800}U + \tfrac{15}{32}V + P. \label{eq:radix9-final}
\end{align}
The rational coefficients in~\eqref{eq:radix9-final} are
$\frac{767}{800} = 0.95875$ and $\frac{15}{32} = 0.46875$.

\subsection{Numerical Validation}
\label{app:validation}

We validate convergence on a $64 \times 64$ symmetric positive definite (SPD) matrix~$A$ with
geometrically spaced eigenvalues in $[10^{-2}, 1]$, giving
$\kappa(I-A) \approx 10^4$. In this appendix, let $B = I - A$
(distinct from the kernel argument~$B$ in the main text).

\paragraph{Setup.}
For series index~$k$, each method computes an approximation~$S_k$ to
$(I-B)^{-1}$. We measure the relative residual
$\|I - (I-B)S_k\|_F / \sqrt{d}$, where $\|\cdot\|_F$ denotes the Frobenius norm.

\paragraph{Results.}
Table~\ref{tab:mmcounts} lists the matrix product count
required to reach machine precision ($\approx 10^{-13}$).

\begin{table}[ht]
\centering
\caption{Matrix product counts to reach $10^{-13}$ residual ($d=64$, $\kappa \approx 10^4$).}
\label{tab:mmcounts}
\begin{tabular}{@{}lcc@{}}
\toprule
Method & Products & Residual \\
\midrule
Binary ($m=2$)   & 38 & $5.6 \times 10^{-13}$ \\
Ternary ($m=3$)  & 36 & $4.7 \times 10^{-13}$ \\
Quinary ($m=5$)  & 32 & $3.9 \times 10^{-13}$ \\
Radix-9          & 32 & $4.1 \times 10^{-13}$ \\
Radix-15 (approx.)& 30 & $9.9 \times 10^{-8}$ \\
\bottomrule
\end{tabular}
\end{table}

Radix-15 uses the fewest products but converges to $\approx 10^{-7}$
rather than machine precision, due to kernel coefficient error.
The naive method requires $k-1 = 10^4$ products for comparable accuracy.

\paragraph{Efficiency comparison.}
Per radix step, the update cost is $C(m) = \mu_m + 2$ products:
\begin{itemize}
  \item Binary: $C(2) = 0 + 2 = 2$, coefficient $\frac{2}{\log_2 2} = 2.00$;
  \item Ternary: $C(3) = 1 + 2 = 3$, coefficient $\frac{3}{\log_2 3} \approx 1.89$;
  \item Quinary: $C(5) = 2 + 2 = 4$, coefficient $\frac{4}{\log_2 5} \approx 1.72$;
  \item Radix-9: $C(9) = 3 + 2 = 5$, coefficient $\frac{5}{\log_2 9} \approx 1.58$;
  \item Radix-15: $C(15) = 4 + 2 = 6$, coefficient $\frac{6}{\log_2 15} \approx 1.54$.
\end{itemize}
Radix-9 achieves the best rate among exact kernels; radix-15 achieves the best overall.

\subsection{Approximate Kernels: Residual vs Radix Trade-off}
\label{app:approx}

For a single truncated Neumann computation (no recursion), we compare
approximate kernels at fixed matrix product budgets (6 and 7 products).
Test matrix: $d=64$, geometrically spaced eigenvalues in $[0.01, 1]$,
giving $\rho(B) = 0.99$.

\paragraph{Key finding: Higher radix dominates.}
Table~\ref{tab:approx} compares kernels at 6 products and 7 products budgets.
Even with high kernel residual, higher radix gives better inverse approximation.
Error is measured in spectral norm: $\|S - A^{-1}\|_2 / \|A^{-1}\|_2$,
where $\|\cdot\|_2$ denotes the largest singular value.

\begin{table}[ht]
\centering
\caption{Inverse approximation error (spectral norm) for single kernel
application. ``vs Exact'': ratio to exact Neumann at same radix.}
\label{tab:approx}
\resizebox{\columnwidth}{!}{%
\begin{tabular}{@{}lcccc@{}}
\toprule
Method & Prods & Kern.\ Res. & Inv.\ Err. & vs Exact \\
\midrule
\multicolumn{5}{l}{\textit{6-product budget}} \\
Standard $S_7$ (radix 7)  & -- & 0 & 0.932 & -- \\
Radix-15 & 4 & $9{\times}10^{-12}$ & 0.860 & 1.000 \\
Radix-16 & 4 & $1{\times}10^{-2}$ & 0.852 & 1.000 \\
Radix-17 & 4 & $1{\times}10^{-1}$ & 0.844 & 1.001 \\
\midrule
\multicolumn{5}{l}{\textit{7-product budget}} \\
Standard $S_8$ (radix 8)  & -- & 0 & 0.923 & -- \\
Radix-24 & 5 & $7{\times}10^{-5}$ & 0.786 & 1.000 \\
Radix-27 & 5 & $1{\times}10^{-3}$ & 0.762 & 1.000 \\
Radix-31 & 5 & $1{\times}10^{-1}$ & 0.733 & 1.002 \\
Radix-33 & 5 & $6{\times}10^{-1}$ & 0.722 & 1.007 \\
\bottomrule
\end{tabular}}
\end{table}

\paragraph{Interpretation.}
The ``vs Exact'' column shows degradation from kernel approximation error.
Even radix-33 with residual 0.6 is only 0.7\% worse than an exact radix-33 kernel.
The dominant factor is radix $m$: error scales as $\rho(B)^m$, so larger $m$
gives exponentially better approximation, overwhelming the kernel residual effect.

\paragraph{Coefficients for radix-33 kernel (5 products).}
The approximate coefficients $c_j$ for $\widetilde{T}_{33}(B) = \sum_{j=0}^{32} c_j B^j$:
\begin{table}[ht]
\centering
\small
\begin{tabular}{@{}r l@{}}
\toprule
$j$ & $c_j$ values \\
\midrule
0--10 & 1.00, 1.00, 1.00, 0.98, 1.00, 1.02, 1.00, 0.69, 1.12, 0.99, 1.05 \\
11--21 & 0.88, 1.11, 0.69, 0.73, 1.01, 1.27, 1.04, 1.08, 1.06, 0.98, 1.00 \\
22--32 & 0.99, 0.88, 1.13, 0.95, 1.02, 0.97, 1.01, 0.99, 1.00, 1.00, 1.00 \\
\bottomrule
\end{tabular}
\end{table}
Coefficients range from 0.69 to 1.27, yet the inverse error is only 1\% worse
than exact. No spillover terms exist beyond degree 32.

%% Appendix - Radix-15 Details

\section{Radix-15 Kernel: Explicit Circuit}
\label{app:radix15}

The circuit~\eqref{eq:radix15kernel} with $P_1 = B^2$ is fully
specified by:
\begin{align*}
  P_2 &= (0.238I + 0.241B + 1.574P_1)
         (0.048I - 0.047B + 0.889P_1), \\
  P_3 &= (0.263I + 0.919B + 0.842P_1 + 0.819P_2) \\
      &\quad \times
         (0.184I + 0.068B + 0.134P_1 - 1.405P_2), \\
  P_4 &= (-0.005I - 1.385B + 0.022P_1
         + 0.122P_2 + 0.275P_3) \\
      &\quad \times
         (-0.701I + 0.602B - 0.624P_1
         - 0.137P_2 + 0.328P_3), \\
  \widetilde{T}_{15} &= I + 0.078B + 1.778P_1
    + 0.664P_2 - 0.024P_3 + P_4.
\end{align*}
These coefficients achieve
$\max_{0 \leq j \leq 14}|[\widetilde{T}_{15}]_j - 1|
  < 9 \times 10^{-7}$
with spillover $0.19\,B^{15} + 0.46\,B^{16} + O(B^{17})$,
giving error map leading coefficient
$c = 1 - [\widetilde{T}_{15}]_{15} \approx 0.81$.

\end{document}